\newtheorem{theorem}{Theorem}[section]
\begin{document}

\title{Solving a class of zero-sum stopping game with regime switching
\thanks{This work was supported by
the National Key R\&D Program of China (2022ZD0120001),
the National Natural Science Foundation of China (11801072, 11971267),
and the Fundamental Research Funds for the Central Universities (2242021R41175).}}

\author{Siyu Lv\thanks{School of Mathematics, Southeast University,
Nanjing 211189, China (lvsiyu@seu.edu.cn).}
\and
Xiao Yang\thanks{School of Mathematics, Southeast University,
Nanjing 211189, China (yangxiaoseu@163.com).}}

\date{}

\maketitle

\begin{abstract}
This paper studies a class of zero-sum stopping game in a regime switching model.
A verification theorem as a sufficient criterion for Nash equilibriums is established
based on a set of variational inequalities (VIs). Under an appropriate regularity
condition for solutions to the VIs, a suitable system of algebraic equations is derived
via the so-called smooth-fit principle. Explicit Nash equilibrium stopping rules of
threshold-type for the two players and the corresponding value function of the game
in closed form are obtained. Numerical experiments are reported to demonstrate the
dependence of the threshold levels on various model parameters. A reduction to the
case with no regime switching is also presented as a comparison.
\end{abstract}

\textbf{Keywords:} optimal stopping, zero-sum game, Markov chain,
verification theorem, smooth-fit principle

\section{Introduction}

Optimal stopping is concerned with the problem that among all possible choices
of stopping times, we are seeking for an optimal one, which gives the best result
in the sense of expectation. To solve optimal stopping problems,
the \emph{variational inequality} (VI) approach has been extensively employed
because it provides some sufficient conditions that are easy to verify
and typically leads to ordinary or partial differential equations
that can be solved analytically or numerically; see, for example,
{\O}ksendal \cite[Chapter 10]{Oksendal2003} and Pham \cite[Chapter 5]{Pham2009}.
On the other hand, optimal stopping has a wide applications in many fields,
such as stock selling ({\O}ksendal \cite[Examples 10.2.2 and 10.4.2]{Oksendal2003})
and option pricing (McKean \cite{Mckean1965}). The game problems of optimal stopping
were initially suggested and investigated by Friedman \cite{F1973} (zero-sum case)
and Bensoussan and Friedman \cite{BF1977} (nonzero-sum case) also using the VI approach.
Since then, many interesting works were motivated along this line;
see Akdim et al. \cite{AOT2006}, De Angelis et al. \cite{DFM2018},
Lv et al. \cite{LWZ2022}, and so on.

The regime switching model is a two-component process $(X_{t},\alpha_{t})$ in which
the first component $X_{t}$ evolves according to a continuous diffusion process
whose drift and diffusion coefficients depend on the regime of $\alpha_{t}$,
where $\alpha_{t}$ is generally assumed to be a finite-state Markov chain.
As a result, the regime switching model exhibits a ``hybrid" feature and has
the ability to capture more directly the discrete events that are less frequent
(occasional) but nevertheless more significant to longer-term system behavior.
In addition, owing to the drift and diffusion coefficients taking only finite
number of values, the regime switching model also has the tractability which enables
feasible numerical schemes to be developed. For more analysis and applications of
regime switching models, one is referred to the monographs by Yin and Zhu \cite{YinZhu2010}
and Yin and Zhang \cite{YinZhang2013}.

Optimal stopping problems for regime switching models have been studied by many researchers
under various contexts and different formulations. Here, we only name a few that are closely
related to our work. Zhang \cite{Zhang2001} considered an optimal selling problem of a stock,
whose price satisfies a geometric Brownian motion modulated by a two-state Markov chain,
through a \emph{two-point boundary-value differential equation} (TPBVDE) approach. The optimal
stopping rule is of threshold-type and to stop whenever the stock price reaches two
pre-defined (lower and upper) bounds. Guo and Zhang \cite{GuoZhang2005} dealt with a similar
problem as \cite{Zhang2001} but instead adopting the so-called \emph{smooth-fit principle}.
The optimal stopping rule is also of threshold-type such that there exist two threshold levels
corresponding to the two states of the Markov chain. Recently, these two methods were
combined together by Zhang and Zhou \cite{ZhangZhou2009} to solve a stock loan valuation
problem with regime switching.

In this paper, we study a class of zero-sum stopping game in a regime switching model.
In order to highlight the main idea and obtain a closed-form solution, we consider
a simple but illustrative formulation, i.e., the state process is described by a scaled
Brownian motion modulated by a two-state Markov chain and the payoffs for the two players
to optimize are linear. Compared with the optimal stopping problems (not games) considered
in \cite{Zhang2001,GuoZhang2005,ZhangZhou2009}, the analysis of the current paper
is more involved. In particular, the partition of \emph{stopping region} and
\emph{continuation region} and the resolution of the VIs are more complicated.
To the best of our knowledge, this paper is the first attempt to establish a theoretical
framework and an analytical approach for such kind of problem. The framework and approach
proposed could be used as a guide for treating the problems with more general models or
more difficult situations.

This paper mainly consists of three parts. The first part is to establish
a verification theorem as a sufficient criterion associated with a set of VIs for Nash
equilibriums. It is proved that according to the VIs, a Nash equilibrium for the two players
can be constructed in terms of the stopping region and continuation region and the solution
to the VIs coincides with the corresponding value function of the game.
Then, in the second part, we adopt the smooth-fit principle to solve the VIs.
Some delicate analysis and matrix manipulation are carried out to obtain explicit Nash
equilibrium stopping rules of threshold-type and the value function in closed-form.
Of course, the threshold levels should depend on the state of the Markov chain.
Finally, in the third part, numerical experiments are reported to demonstrate
the dependence of the threshold levels on various model parameters.
Moreover, a reduction to the case when there is no regime switching is also presented.

It is emphasized that in the verification theorem, an \emph{appropriate}
regularity condition (see condition (a) of Theorem \ref{VT theorem})
is so crucial in that, with the help of the smooth-fit principle, a system of algebraic
equations can be derived which is suitable in the sense that the number of equations
is \emph{equal} to that of undetermined parameters. Consequently, one needs only to solve
an algebraic system in order to identify a Nash equilibrium and the value function.
If, on the other hand, the regularity condition is set too strong or too weak
so that the resulting algebraic system has no solution or infinitely many solutions,
then the smooth-fit technique would collapse.

We would like to point out that closed-form solutions in stochastic control problems
are rarely obtainable. A closed-form solution is desirable in practice because
it provides a clear picture on dependence of model parameters and could be useful
for related computational methods to be developed. This paper reports a closed-form
solution to a class of stopping game with regime switching, which adds to the list
of ``solvable" stochastic control problems in the literature.

The rest of this paper is organized as follows. Section \ref{section VT} formulates
the problem and establishes the verification theorem. Section \ref{section SF} obtains
an explicit Nash equilibrium and the corresponding value function in closed-form.
Section \ref{section NR} reports numerical experiments to examine the dependence
of the threshold levels and presents a reduction to the case with no regime switching.
Finally, Section \ref{section CR} concludes the paper.

\section{Verification theorem}\label{section VT}

Let $(\Omega,\mathcal{F},P)$ be a fixed probability space on which
a one-dimensional standard Brownian motion $W_{t}$, $t\geq0$,
and a two-state Markov chain $\alpha_{t}\in\mathcal{M}=\{1,2\}$, $t\geq0$,
are defined. The generator of $\alpha_{t}$ is given by
$$
\left[
  \begin{array}{cc}
    -\lambda_{1} & \lambda_{1} \\
    \lambda_{2} & -\lambda_{2} \\
  \end{array}
\right],
$$
for some $\lambda_{1}>0$ and $\lambda_{2}>0$. Assume that $W$ and $\alpha$
are independent. Let $\{\mathcal{F}_{t}\}_{t\geq0}$ be the natural filtration
of $W$ and $\alpha$.

In this paper, $f^{\prime}(x)$ (respectively, $f^{\prime\prime}(x)$) denotes the
first (respectively, second order) derivative of a function $f$ with respect to $x$.
$C^{1}$ (respectively, $C^{2}$) denotes the space of functions whose first order
(respectively, second order) derivatives are continuously differentiable.
$\partial D$ denotes the boundary of a region $D$.

Let the two players in the game be labeled by Player 1 and Player 2.
The one-dimensional state process $X$ is described by
\begin{equation}\label{state equation}
\begin{aligned}
X_{t}=x+\int_{0}^{t}\sigma(\alpha_{s})dW_{s},\quad t\geq 0,
\end{aligned}
\end{equation}
where $\sigma(i)$, $i\in\mathcal{M}$, are positive constants.
The objective functional for Player 1 to minimize and Player 2 to maximize is given by
\begin{equation}\label{payoff}
\begin{aligned}
J(x,i;\tau_{1},\tau_{2})
=E[e^{-r\tau_{1}}(X_{\tau_{1}}-K(\alpha_{t}))1_{\{\tau_{1}<\tau_{2}\}}
+e^{-r\tau_{2}}(X_{\tau_{2}}-\widetilde{K}(\alpha_{t}))1_{\{\tau_{2}\leq\tau_{1}\}}],
\end{aligned}
\end{equation}
where $K(i)$ and $\widetilde{K}(i)$, $i\in\mathcal{M}$, are constants with
$K(i)<\widetilde{K}(i)$, $r>0$ is the discount factor, and $\tau_{1}$ and $\tau_{2}$
are $\mathcal{F}_{t}$-stopping times chosen by Player 1 and Player 2, respectively.
The aim is to find a Nash equilibrium $(\tau_{1}^{*},\tau_{2}^{*})$ such that
\begin{equation*}
\begin{aligned}
J(x,i;\tau_{1}^{*},\tau_{2})
\leq J(x,i;\tau_{1}^{*},\tau_{2}^{*})
\leq J(x,i;\tau_{1},\tau_{2}^{*}).
\end{aligned}
\end{equation*}
If such a Nash equilibrium exists, we denote
$v(x,i)=J(x,i;\tau_{1}^{*},\tau_{2}^{*})$ as the
corresponding value function of the game.

Denote
\begin{equation*}
\begin{aligned}
D_{1,i}=\{v(x,i)-(x-K(i))<0\},
\end{aligned}
\end{equation*}
and
\begin{equation*}
\begin{aligned}
D_{2,i}=\{v(x,i)-(x-\widetilde{K}(i))>0\}.
\end{aligned}
\end{equation*}
In fact, $D_{1,i}$ (respectively, $D_{2,i}$) is the so-called continuation region
for Player 1 (respectively, Player 2) when the Markov chain is at regime $i$.

In the following, we state and prove the verification theorem for our zero-sum
stopping game problem with regime switching.
\begin{theorem}\label{VT theorem}
Let $v(x,i)$, $i\in\mathcal{M}$, be a real-valued function satisfying
the following conditions:

{\rm(a)} For $i\in\mathcal{M}$ and $k=1,2$,
\begin{equation*}
\begin{aligned}
v(\cdot,i)\in&C^{2}(R\backslash (\cup_{i\in\mathcal{M},k=1,2}\partial D_{k,i}))\cap C^{1}(R).
\end{aligned}
\end{equation*}

{\rm(b)} For $i\in\mathcal{M}$ and $x\in R$,
\begin{equation*}
\begin{aligned}
x-\widetilde{K}(i)\leq v(x,i)\leq x-K(i).
\end{aligned}
\end{equation*}

{\rm(c)} For $i,j\in\mathcal{M}$ with $i\neq j$ and $x\in D_{1,i}$,
\begin{equation*}
\begin{aligned}
\min\bigg\{rv(x,i)-\frac{1}{2}\sigma^{2}(i)v^{\prime\prime}(x,i)
-\lambda_{i}[v(x,j)-v(x,i)],&\\
v(x,i)-(x-\widetilde{K}(i))\bigg\}=0.&
\end{aligned}
\end{equation*}

{\rm(d)} For $i,j\in\mathcal{M}$ with $i\neq j$ and $x\in D_{2,i}$,
\begin{equation*}
\begin{aligned}
\max\bigg\{rv(x,i)-\frac{1}{2}\sigma^{2}(i)v^{\prime\prime}(x,i)
-\lambda_{i}[v(x,j)-v(x,i)],&\\
v(x,i)-(x-K(i))\bigg\}=0.&
\end{aligned}
\end{equation*}
Define $\tau_{1}^{*}$ and $\tau_{2}^{*}$ as
\begin{equation}\label{optimal 1}
\begin{aligned}
\tau_{1}^{*}=&\inf\{t\geq0|X_{t}\notin D_{1,\alpha_{t}}\},
\end{aligned}
\end{equation}
and
\begin{equation}\label{optimal 2}
\begin{aligned}
\tau_{2}^{*}=&\inf\{t\geq0|X_{t}\notin D_{2,\alpha_{t}}\}.
\end{aligned}
\end{equation}
Then, $(\tau_{1}^{*},\tau_{2}^{*})$ is a Nash equilibrium for the two players
and $v(x,i)$, $i\in\mathcal{M}$, is the corresponding value function of the game.
\end{theorem}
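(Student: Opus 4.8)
The plan is to base the whole argument on a Dynkin--It\^o formula for the regime-switching process $(X_{t},\alpha_{t})$, whose infinitesimal generator acts on a smooth $\phi$ by $\mathcal{A}\phi(x,i)=\frac{1}{2}\sigma^{2}(i)\phi''(x,i)+\lambda_{i}[\phi(x,j)-\phi(x,i)]$ with $j\neq i$. With this notation the first entry of the $\min$/$\max$ in conditions (c) and (d) is exactly $rv(x,i)-\mathcal{A}v(x,i)$, and the key identity I want is, for a suitable stopping time $\theta$,
\begin{equation*}
E\big[e^{-r\theta}v(X_{\theta},\alpha_{\theta})\big]=v(x,i)+E\Big[\int_{0}^{\theta}e^{-rs}\big(\mathcal{A}v-rv\big)(X_{s},\alpha_{s})\,ds\Big].
\end{equation*}
The delicate point is that $v$ is only $C^{1}$ across the boundary set $\cup\,\partial D_{k,i}$ and merely $C^{2}$ off it, so I would justify the identity via a generalized It\^o formula, using that the continuous (time-changed Brownian) component $X$ accumulates zero occupation time at each isolated boundary point and that the global $C^{1}$ regularity in condition (a) annihilates the local-time term coming from the jump in $v''$. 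This is precisely the structural role of the smooth-fit condition.

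Next I would read off the sign of $rv-\mathcal{A}v$ from the variational inequalities. Condition (b) forces $x-\widetilde{K}(i)\le v(x,i)\le x-K(i)$, and since $K<\widetilde{K}$ the continuation regions cover the line, $D_{1,i}\cup D_{2,i}=R$; the state line then splits, at each regime $i$, into three pieces. On Player~1's stopping set $D_{1,i}^{c}\subseteq D_{2,i}$ one has $v=x-K(i)$, so (d) gives $rv-\mathcal{A}v\le 0$. On Player~2's stopping set $D_{2,i}^{c}\subseteq D_{1,i}$ one has $v=x-\widetilde{K}(i)$, so (c) gives $rv-\mathcal{A}v\ge 0$. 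On the common continuation region $D_{1,i}\cap D_{2,i}$ the differential term is active in both (c) and (d), forcing $rv-\mathcal{A}v=0$.

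With these signs the three required relations come from three choices of terminal time. For the upper bound $J(x,i;\tau_{1}^{*},\tau_{2}^{*})\le J(x,i;\tau_{1},\tau_{2}^{*})$, I fix an arbitrary $\tau_{1}$ and take $\theta=\tau_{1}\wedge\tau_{2}^{*}$: before $\tau_{2}^{*}$ the state lies in $D_{2,\alpha}$ where $\mathcal{A}v-rv\ge 0$, so the integral is nonnegative and $E[e^{-r\theta}v(X_{\theta},\alpha_{\theta})]\ge v(x,i)$; on $\{\tau_{1}<\tau_{2}^{*}\}$ condition (b) gives $v\le X-K$, while on $\{\tau_{2}^{*}\le\tau_{1}\}$ the process has exited $D_{2,\alpha}$ so $v=X-\widetilde{K}$ exactly, and comparing with the payoff yields $v(x,i)\le J(x,i;\tau_{1},\tau_{2}^{*})$. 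The lower bound is symmetric, using $\theta=\tau_{1}^{*}\wedge\tau_{2}$, the sign $rv-\mathcal{A}v\ge0$ on $D_{1,\alpha}$, and the reverse comparison from (b). Finally, with $\theta=\tau_{1}^{*}\wedge\tau_{2}^{*}$ the state stays in $D_{1,\alpha}\cap D_{2,\alpha}$ until $\theta$, the integral vanishes, and at $\theta$ the value $v$ equals whichever linear payoff is triggered, giving $v(x,i)=J(x,i;\tau_{1}^{*},\tau_{2}^{*})$. Chaining the three yields the Nash inequalities and identifies $v$ as the value function.

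I expect the main obstacles to be analytic rather than structural. The first is the generalized It\^o formula under only $C^{1}$-pasting, handled as above. The second is the passage from a bounded $\theta$ to the genuine stopping times: I would localize by $\theta\wedge n$, discard the (local-)martingale part in expectation, and then control the limit. Here the discount factor together with the linear growth of $v$ and the Brownian scaling of $X$ should deliver a transversality estimate $\lim_{T\to\infty}E[e^{-rT}v(X_{T},\alpha_{T})]=0$, which both legitimizes dropping the martingale term and assigns the natural value zero on the event that a stopping time is infinite; one also checks that the exit characterizations $v=X-\widetilde{K}$ (resp. $v=X-K$) persist when the boundary is reached through a regime switch, which is immediate from the per-regime definition of $D_{k,i}$ and the continuity of $v$.
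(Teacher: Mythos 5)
Your proposal is correct and follows essentially the same route as the paper: apply the (Dynkin--)It\^o formula for the regime-switching generator to $e^{-rt}v(X_{t},\alpha_{t})$ up to $\tau_{1}\wedge\tau_{2}^{*}$ (resp.\ $\tau_{1}^{*}\wedge\tau_{2}$, $\tau_{1}^{*}\wedge\tau_{2}^{*}$), read off the sign of $rv-\mathcal{A}v$ from conditions (c)--(d) on the relevant continuation region, and close the argument with condition (b) and the exit characterization of $v$ at the stopping time. The only difference is technical bookkeeping: where the paper invokes {\O}ksendal's smooth approximation to sidestep the $C^{1}$-only pasting, you argue via local time and occupation at the boundary, and you additionally spell out the localization and transversality step that the paper leaves implicit.
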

\begin{proof}
We first note that, from condition (a), $v(\cdot,i)$, $i\in\mathcal{M}$, is only
$C^{1}$ and not necessarily $C^{2}$ at the boundaries of the continuation regions.
Actually, the regularity condition (a) is set in the present form to ensure that
the system of algebraic equations (\ref{SF-v1-a1-1})-(\ref{SF-v2-b2-2})
(resulted from the smooth-fit principle) has the same number of equations and unknowns
(see Section \ref{section SF}). However, in this theorem, in order to apply It\^{o}'s formula,
we still need $C^{2}$-smoothness at the boundaries.
This can be remedied by the \emph{smooth approximation for variational inequalities}
developed by {\O}ksendal \cite[Theorem 10.4.1 and Appendix D]{Oksendal2003}.
Thus, for convenience, here we directly consider $v(\cdot,i)$, $i\in\mathcal{M}$,
to be $C^{2}$ on the whole space; see Guo and Zhang \cite[Theorem 3.1]{GuoZhang2004},
Guo and Zhang \cite[Theorem 2]{GuoZhang2005}, and A\"{\i}d et al. \cite[Theorem 1]{ABCCV2020}
for a similar treatment.

We first prove that $v(x,i)\leq J(x,i;\tau_{1},\tau_{2}^{*})$,
where $\tau_{1}$ is an arbitrary stopping time chosen by Player 1
and $\tau_{2}^{*}$ is given by (\ref{optimal 2}).
By applying It\^{o}'s formula to $e^{-rt}v(X_{t},\alpha_{t})$
between 0 and $\tau=\tau_{1}\wedge\tau_{2}^{*}$, we have
\begin{equation*}
\begin{aligned}
&E[e^{-r\tau}v(X_{\tau},\alpha_{\tau})]-v(x,i)\\
=&E\bigg[\int_{0}^{\tau}e^{-rt}\bigg\{-rv(X_{t},\alpha_{t})
+\frac{1}{2}\sigma^{2}(\alpha_{t})v^{\prime\prime}(X_{t},\alpha_{t})\\
&+\lambda_{\alpha_{t}}[v(X_{t},j)-v(X_{t},\alpha_{t})]\bigg\}dt\bigg]\geq0,
\quad j\in\mathcal{M},\quad j\neq\alpha_{t},
\end{aligned}
\end{equation*}
where the inequality is due to condition (d) by noting that
$X_{t}\in D_{2,\alpha_{t}}$ before $\tau$ (or, $\tau_{2}^{*}$).

That is
\begin{equation}\label{VT proof 1}
\begin{aligned}
v(x,i)\leq E[e^{-r\tau}v(X_{\tau},\alpha_{\tau})].
\end{aligned}
\end{equation}
If $\tau=\tau_{1}$, then condition (b) implies
\begin{equation}\label{VT proof 2}
\begin{aligned}
v(X_{\tau},\alpha_{\tau})\leq X_{\tau}-K(\alpha_{\tau}).
\end{aligned}
\end{equation}
If $\tau=\tau_{2}^{*}$, then the definition (\ref{optimal 2}) of $\tau_{2}^{*}$ yields
\begin{equation}\label{VT proof 3}
\begin{aligned}
v(X_{\tau},\alpha_{\tau})=X_{\tau}-\widetilde{K}(\alpha_{\tau}).
\end{aligned}
\end{equation}
Combining (\ref{VT proof 1})-(\ref{VT proof 3}) and recalling
the definition of objective functional (\ref{payoff}), we have
\begin{equation*}
\begin{aligned}
v(x,i)\leq J(x,i;\tau_{1},\tau_{2}^{*}).
\end{aligned}
\end{equation*}
The proof of
\begin{equation*}
\begin{aligned}
v(x,i)\geq J(x,i;\tau_{1}^{*},\tau_{2}),
\end{aligned}
\end{equation*}
where $\tau_{1}^{*}$ is given by (\ref{optimal 1}) and
$\tau_{2}$ is an arbitrary stopping time chosen by Player 2,
is similar, and the proof of
\begin{equation*}
\begin{aligned}
v(x,i)=J(x,i;\tau_{1}^{*},\tau_{2}^{*})
\end{aligned}
\end{equation*}
is analogous to the above except that all the inequalities become equalities.
\end{proof}

\section{Smooth-fit and explicit solution}\label{section SF}

In this section, we apply the verification theorem (Theorem \ref{VT theorem})
together with the smooth-fit principle to find an explicit Nash equilibrium
and the corresponding value function in closed-form.

Intuitively, Player 1 (respectively, Player 2) prefers the state process $X_{t}$
to reach a low (respectively, high) level to optimize its own interest.
Note that $(X_{t},\alpha_{t})$ is a joint Markov process, hence it is natural
and reasonable to consider a kind of \emph{threshold-type} and \emph{regime-dependent}
stopping rules for the two players, which can be determined by four
constants $a_{1},a_{2},b_{1},b_{2}$, as follows:
(\expandafter{\romannumeral1}) Player 1 stops only if $X_{t}$ falls below
$a_{1}$ (respectively, $a_{2}$) when $\alpha_{t}=1$ (respectively, $\alpha_{t}=2$),
(\expandafter{\romannumeral2}) Player 2 stops only if $X_{t}$ goes above
$b_{1}$ (respectively, $b_{2}$) when $\alpha_{t}=1$ (respectively, $\alpha_{t}=2$).

Without loss of generality, we assume
\begin{equation}\label{order}
\begin{aligned}
a_{1}<a_{2}<b_{1}<b_{2}.
\end{aligned}
\end{equation}
The other cases with different orders can be treated in the same way.
In fact, (\ref{order}) means that $(-\infty,a_{i})$ and $(a_{i},\infty)$
are the stopping region and continuation region for Player 1, respectively,
and $(b_{i},\infty)$ and $(-\infty,b_{i})$ are the stopping region
and continuation region for Player 2, respectively,
when the Markov chain is at regime $i$.

Note that $(a_{2},b_{1})$ is a common continuation region for both players,
so we have
\begin{equation*}
\left\{
\begin{aligned}
rv(x,1)-\frac{1}{2}\sigma^{2}(1)v^{\prime\prime}(x,1)
-\lambda_{1}[v(x,2)-v(x,1)]=&0,\\
rv(x,2)-\frac{1}{2}\sigma^{2}(2)v^{\prime\prime}(x,2)
-\lambda_{2}[v(x,1)-v(x,2)]=&0.
\end{aligned}
\right.
\end{equation*}
Solving the above equation, it follows that on $(a_{2},b_{1})$:
\begin{equation*}
\begin{aligned}
v(x,1)=\sum_{k=1}^{4}A_{k}e^{\beta_{k}x},
\end{aligned}
\end{equation*}
and
\begin{equation*}
\begin{aligned}
v(x,2)=\sum_{k=1}^{4}B_{k}e^{\beta_{k}x},
\end{aligned}
\end{equation*}
where $\beta_{k}$, $k=1,2,3,4$, are solutions to the following equation:
\begin{equation*}
\begin{aligned}
\frac{\sigma^{2}(1)\sigma^{2}(2)}{4}
\bigg[\bigg(\beta^{2}-\frac{2(r+\lambda_{1})}{\sigma^{2}(1)}\bigg)
\bigg(\beta^{2}-\frac{2(r+\lambda_{2})}{\sigma^{2}(2)}\bigg)
-\frac{4\lambda_{1}\lambda_{2}}{\sigma^{2}(1)\sigma^{2}(2)}\bigg]=0,
\end{aligned}
\end{equation*}
and
$$
B_{k}=\rho_{k}A_{k}
$$
with
\begin{equation*}
\begin{aligned}
\rho_{k}=\frac{1}{\lambda_{1}}\bigg[(r+\lambda_{1})
-\frac{1}{2}\sigma^{2}(1)\beta_{k}^{2}\bigg],\quad k=1,2,3,4.
\end{aligned}
\end{equation*}
On $(-\infty,a_{2})$, Player 1 stops when the Markov chain is at regime 2, thus
\begin{equation*}
\begin{aligned}
v(x,2)=x-K(2).
\end{aligned}
\end{equation*}
On $(a_{1},a_{2})$, Player 1 does not stop when the Markov chain is at regime 1, then
\begin{equation*}
\begin{aligned}
rv(x,1)-\frac{1}{2}\sigma^{2}(1)v^{\prime\prime}(x,1)
-\lambda_{1}[(x-K(2))-v(x,1)]=0,
\end{aligned}
\end{equation*}
which admits a solution
\begin{equation*}
\begin{aligned}
v(x,1)=C_{1}e^{\gamma_{1}x}+C_{2}e^{\gamma_{2}x}+px+q,
\end{aligned}
\end{equation*}
where
\begin{equation*}
\begin{aligned}
p=\frac{\lambda_{1}}{r+\lambda_{1}},\quad
q=-\frac{\lambda_{1}K(2)}{r+\lambda_{1}},
\end{aligned}
\end{equation*}
and $\gamma_{1},\gamma_{2}$ are solutions to the following equation:
\begin{equation*}
\begin{aligned}
(r+\lambda_{1})-\frac{1}{2}\sigma^{2}(1)\gamma^{2}=0.
\end{aligned}
\end{equation*}
On $(-\infty,a_{1})$,
Player 1 stops when the Markov chain is at regime 1, so
\begin{equation*}
\begin{aligned}
v(x,1)=x-K(1).
\end{aligned}
\end{equation*}
Similarly, on $(b_{1},\infty)$,
Player 2 stops when the Markov chain is at regime 1, i.e.,
\begin{equation*}
\begin{aligned}
v(x,1)=x-\widetilde{K}(1).
\end{aligned}
\end{equation*}
On $(b_{1},b_{2})$,
Player 2 still stays in its continuation region
when the Markov chain is at regime 2, then
\begin{equation*}
\begin{aligned}
rv(x,2)-\frac{1}{2}\sigma^{2}(2)v^{\prime\prime}(x,2)
-\lambda_{2}[(x-\widetilde{K}(1))-v(x,2)]=0,
\end{aligned}
\end{equation*}
which admits a solution
\begin{equation*}
\begin{aligned}
v(x,2)=\widetilde{C}_{1}e^{\widetilde{\gamma}_{1}x}
+\widetilde{C}_{2}e^{\widetilde{\gamma}_{2}x}
+\widetilde{p}x+\widetilde{q},
\end{aligned}
\end{equation*}
where
\begin{equation*}
\begin{aligned}
\widetilde{p}=\frac{\lambda_{2}}{r+\lambda_{2}},\quad
\widetilde{q}=-\frac{\lambda_{2}\widetilde{K}(1)}{r+\lambda_{2}},
\end{aligned}
\end{equation*}
and $\widetilde{\gamma}_{1},\widetilde{\gamma}_{2}$
are solutions to the following equation:
\begin{equation*}
\begin{aligned}
(r+\lambda_{2})-\frac{1}{2}\sigma^{2}(2)\widetilde{\gamma}^{2}=0.
\end{aligned}
\end{equation*}
On $(b_{2},\infty)$,
Player 2 stops when the Markov chain is at regime 2, so
\begin{equation*}
\begin{aligned}
v(x,2)=x-\widetilde{K}(2).
\end{aligned}
\end{equation*}
We summarize the above analysis in Table \ref{Table}.
\begin{table}[htbp]
\centering
\caption{$v(x,i)$, $i=1,2$}
\label{Table}
\renewcommand\arraystretch{1.5}
\begin{tabular}{|c|c|c|}
\hline
$x$ & $v(x,1)$ & $v(x,2)$ \\
\hline
$(-\infty,a_{1})$ & $x-K(1)$ & $x-K(2)$ \\
\hline
$(a_{1},a_{2})$ & $\sum_{k=1}^{2}C_{k}e^{\gamma_{k}x}+px+q$ & $x-K(2)$ \\
\hline
$(a_{2},b_{1})$ & $\sum_{k=1}^{4}A_{k}e^{\beta_{k}x}$ & $\sum_{k=1}^{4}B_{k}e^{\beta_{k}x}$ \\
\hline
$(b_{1},b_{2})$ & $x-\widetilde{K}(1)$ &
$\sum_{k=1}^{2}\widetilde{C}_{k}e^{\widetilde{\gamma}_{k}x}+\widetilde{p}x+\widetilde{q}$ \\
\hline
$(b_{2},\infty)$ & $x-\widetilde{K}(1)$ & $x-\widetilde{K}(2)$ \\
\hline
\end{tabular}
\end{table}

Now we apply the smooth-fit principle to $v(x,1)$ and $v(x,2)$ at the boundaries
of continuation regions of the two players, i.e.,
we shall paste $v(x,1)$ continuously differentiable at $a_{1}$, $a_{2}$, $b_{1}$
and paste $v(x,2)$ continuously differentiable at $a_{2}$, $b_{1}$, $b_{2}$.
To be precise:

For $v(x,1)$: at $a_{1}$,
\begin{align}
&a_{1}-K(1)=C_{1}e^{\gamma_{1}a_{1}}+C_{2}e^{\gamma_{2}a_{1}}+pa_{1}+q,\label{SF-v1-a1-1}\\
&1=C_{1}\gamma_{1}e^{\gamma_{1}a_{1}}+C_{2}\gamma_{2}e^{\gamma_{2}a_{1}}+p,\label{SF-v1-a1-2}
\end{align}
at $a_{2}$,
\begin{align}
&C_{1}e^{\gamma_{1}a_{2}}+C_{2}e^{\gamma_{2}a_{2}}+pa_{2}+q
=\sum_{k=1}^{4}A_{k}e^{\beta_{k}a_{2}},\label{SF-v1-a2-1}\\
&C_{1}\gamma_{1}e^{\gamma_{1}a_{2}}+C_{2}\gamma_{2}e^{\gamma_{2}a_{2}}+p
=\sum_{k=1}^{4}A_{k}\beta_{k}e^{\beta_{k}a_{2}},\label{SF-v1-a2-2}
\end{align}
at $b_{1}$,
\begin{align}
&\sum_{k=1}^{4}A_{k}e^{\beta_{k}b_{1}}=b_{1}-\widetilde{K}(1),\label{SF-v1-b1-1}\\
&\sum_{k=1}^{4}A_{k}\beta_{k}e^{\beta_{k}b_{1}}=1.\label{SF-v1-b1-2}
\end{align}
For $v(x,2)$: at $a_{2}$,
\begin{align}
&a_{2}-K(2)=\sum_{k=1}^{4}B_{k}e^{\beta_{k}a_{2}},\label{SF-v2-a2-1}\\
&1=\sum_{k=1}^{4}B_{k}\beta_{k}e^{\beta_{k}a_{2}},\label{SF-v2-a2-2}
\end{align}
at $b_{1}$,
\begin{align}
&\sum_{k=1}^{4}B_{k}e^{\beta_{k}b_{1}}
=\widetilde{C}_{1}e^{\widetilde{\gamma}_{1}b_{1}}
+\widetilde{C}_{2}e^{\widetilde{\gamma}_{2}b_{1}}
+\widetilde{p}b_{1}+\widetilde{q},\label{SF-v2-b1-1}\\
&\sum_{k=1}^{4}B_{k}\beta_{k}e^{\beta_{k}b_{1}}
=\widetilde{C}_{1}\widetilde{\gamma}_{1}e^{\widetilde{\gamma}_{1}b_{1}}
+\widetilde{C}_{2}\widetilde{\gamma}_{2}e^{\widetilde{\gamma}_{2}b_{1}}
+\widetilde{p},\label{SF-v2-b1-2}
\end{align}
at $b_{2}$,
\begin{align}
&\widetilde{C}_{1}e^{\widetilde{\gamma}_{1}b_{2}}
+\widetilde{C}_{2}e^{\widetilde{\gamma}_{2}b_{2}}
+\widetilde{p}b_{2}+\widetilde{q}
=b_{2}-\widetilde{K}(2),\label{SF-v2-b2-1}\\
&\widetilde{C}_{1}\widetilde{\gamma}_{1}e^{\widetilde{\gamma}_{1}b_{2}}
+\widetilde{C}_{2}\widetilde{\gamma}_{2}e^{\widetilde{\gamma}_{2}b_{2}}
+\widetilde{p}=1.\label{SF-v2-b2-2}
\end{align}
Now, we have 12 algebraic equations (\ref{SF-v1-a1-1})-(\ref{SF-v2-b2-2})
for 12 unknowns $a_{1}$, $a_{2}$, $b_{1}$, $b_{2}$, $A_{1}$, $A_{2}$, $A_{3}$, $A_{4}$,
$C_{1}$, $C_{2}$, $\widetilde{C}_{1}$, $\widetilde{C}_{2}$.
Then, we will solve the algebraic system (\ref{SF-v1-a1-1})-(\ref{SF-v2-b2-2})
to get the unknowns explicitly by some delicate matrix manipulation.

In the following, we assume that the related matrices are invertible when needed.
At first, from the equations (\ref{SF-v1-a2-1}), (\ref{SF-v1-a2-2}),
(\ref{SF-v2-a2-1}), (\ref{SF-v2-a2-2}), we have
(recalling that $B_{k}=\rho_{k}A_{k}$, $k=1,2,3,4$)
\begin{equation*}
\begin{aligned}
&\left[
  \begin{array}{c}
    C_{1}e^{\gamma_{1}a_{2}}+C_{2}e^{\gamma_{2}a_{2}}+pa_{2}+q \\
    C_{1}\gamma_{1}e^{\gamma_{1}a_{2}}+C_{2}\gamma_{2}e^{\gamma_{2}a_{2}}+p \\
    a_{2}-K(2) \\
    1 \\
  \end{array}
\right]\\
=&\left[
    \begin{array}{cccc}
      1 & 1 & 1 & 1 \\
      \beta_{1} & \beta_{2} & \beta_{3} & \beta_{4} \\
      \rho_{1} & \rho_{2} & \rho_{3} & \rho_{4} \\
      \beta_{1}\rho_{1} & \beta_{2}\rho_{2} & \beta_{3}\rho_{3} & \beta_{4}\rho_{4} \\
    \end{array}
  \right]
\left[
  \begin{array}{cccc}
    e^{\beta_{1}a_{2}} & 0 & 0 & 0 \\
    0 & e^{\beta_{2}a_{2}} & 0 & 0 \\
    0 & 0 & e^{\beta_{3}a_{2}} & 0 \\
    0 & 0 & 0 & e^{\beta_{4}a_{2}} \\
  \end{array}
\right]
\left[
  \begin{array}{c}
    A_{1} \\
    A_{2} \\
    A_{3} \\
    A_{4} \\
  \end{array}
\right].
\end{aligned}
\end{equation*}
Then we can represent $(A_{1},A_{2},A_{3},A_{4})^{\top}$ as
\begin{equation*}
\begin{aligned}
\left[
  \begin{array}{c}
    A_{1} \\
    A_{2} \\
    A_{3} \\
    A_{4} \\
  \end{array}
\right]
=&\left[
  \begin{array}{cccc}
    e^{-\beta_{1}a_{2}} & 0 & 0 & 0 \\
    0 & e^{-\beta_{2}a_{2}} & 0 & 0 \\
    0 & 0 & e^{-\beta_{3}a_{2}} & 0 \\
    0 & 0 & 0 & e^{-\beta_{4}a_{2}} \\
  \end{array}
\right]
\left[
    \begin{array}{cccc}
      1 & 1 & 1 & 1 \\
      \beta_{1} & \beta_{2} & \beta_{3} & \beta_{4} \\
      \rho_{1} & \rho_{2} & \rho_{3} & \rho_{4} \\
      \beta_{1}\rho_{1} & \beta_{2}\rho_{2} & \beta_{3}\rho_{3} & \beta_{4}\rho_{4} \\
    \end{array}
  \right]^{-1}\\
&\times\left(
\left[
  \begin{array}{cc}
    1 & 1 \\
    \gamma_{1} & \gamma_{2} \\
    0 & 0 \\
    0 & 0 \\
  \end{array}
\right]
\left[
  \begin{array}{cc}
    e^{\gamma_{1}a_{2}} & 0 \\
    0 & e^{\gamma_{2}a_{2}} \\
  \end{array}
\right]
\left[
  \begin{array}{c}
    C_{1} \\
    C_{2} \\
  \end{array}
\right]
+
\left[
  \begin{array}{c}
    pa_{2}+q \\
    p \\
    a_{2}-K(2) \\
    1 \\
  \end{array}
\right]\right).
\end{aligned}
\end{equation*}
From (\ref{SF-v1-a1-1}) and (\ref{SF-v1-a1-2}), we have
\begin{equation}\label{SF-C1 C2}
\begin{aligned}
\left[
  \begin{array}{c}
    C_{1} \\
    C_{2} \\
  \end{array}
\right]
=
\left[
  \begin{array}{cc}
    e^{-\gamma_{1}a_{1}} & 0 \\
    0 & e^{-\gamma_{2}a_{1}} \\
  \end{array}
\right]
\left[
  \begin{array}{cc}
    1 & 1 \\
    \gamma_{1} & \gamma_{2} \\
  \end{array}
\right]^{-1}
\left[
  \begin{array}{c}
    (1-p)a_{1}-q-K(1) \\
    1-p \\
  \end{array}
\right].
\end{aligned}
\end{equation}
So we obtain
\begin{equation}\label{SF-A1 A2 A3 A4 form-1}
\begin{aligned}
\left[
  \begin{array}{c}
    A_{1} \\
    A_{2} \\
    A_{3} \\
    A_{4} \\
  \end{array}
\right]
=&\left[
  \begin{array}{cccc}
    e^{-\beta_{1}a_{2}} & 0 & 0 & 0 \\
    0 & e^{-\beta_{2}a_{2}} & 0 & 0 \\
    0 & 0 & e^{-\beta_{3}a_{2}} & 0 \\
    0 & 0 & 0 & e^{-\beta_{4}a_{2}} \\
  \end{array}
\right]
\left[
    \begin{array}{cccc}
      1 & 1 & 1 & 1 \\
      \beta_{1} & \beta_{2} & \beta_{3} & \beta_{4} \\
      \rho_{1} & \rho_{2} & \rho_{3} & \rho_{4} \\
      \beta_{1}\rho_{1} & \beta_{2}\rho_{2} & \beta_{3}\rho_{3} & \beta_{4}\rho_{4} \\
    \end{array}
  \right]^{-1}\\
&\times\left(
\left[
  \begin{array}{cc}
    1 & 1 \\
    \gamma_{1} & \gamma_{2} \\
    0 & 0 \\
    0 & 0 \\
  \end{array}
\right]
\left[
  \begin{array}{cc}
    e^{\gamma_{1}a_{2}} & 0 \\
    0 & e^{\gamma_{2}a_{2}} \\
  \end{array}
\right]
\left[
  \begin{array}{cc}
    e^{-\gamma_{1}a_{1}} & 0 \\
    0 & e^{-\gamma_{2}a_{1}} \\
  \end{array}
\right]
\left[
  \begin{array}{cc}
    1 & 1 \\
    \gamma_{1} & \gamma_{2} \\
  \end{array}
\right]^{-1}\right.\\
&\left.\times
\left[
  \begin{array}{c}
    (1-p)a_{1}-q-K(1) \\
    1-p \\
  \end{array}
\right]
+
\left[
  \begin{array}{c}
    pa_{2}+q \\
    p \\
    a_{2}-K(2) \\
    1 \\
  \end{array}
\right]\right)\\
\doteq& F_{1}(a_{1},a_{2}).
\end{aligned}
\end{equation}
Note that in the above equation (\ref{SF-A1 A2 A3 A4 form-1}), only $a_{1},a_{2}$ are involved
to represent $(A_{1},A_{2},A_{3},A_{4})^{\top}$.

On the other hand, it follows from (\ref{SF-v1-b1-1}), (\ref{SF-v1-b1-2}),
(\ref{SF-v2-b1-1}), (\ref{SF-v2-b1-2}) that
\begin{equation*}
\begin{aligned}
\left[
  \begin{array}{c}
    A_{1} \\
    A_{2} \\
    A_{3} \\
    A_{4} \\
  \end{array}
\right]
=&\left[
  \begin{array}{cccc}
    e^{-\beta_{1}b_{1}} & 0 & 0 & 0 \\
    0 & e^{-\beta_{2}b_{1}} & 0 & 0 \\
    0 & 0 & e^{-\beta_{3}b_{1}} & 0 \\
    0 & 0 & 0 & e^{-\beta_{4}b_{1}} \\
  \end{array}
\right]
\left[
    \begin{array}{cccc}
      1 & 1 & 1 & 1 \\
      \beta_{1} & \beta_{2} & \beta_{3} & \beta_{4} \\
      \rho_{1} & \rho_{2} & \rho_{3} & \rho_{4} \\
      \beta_{1}\rho_{1} & \beta_{2}\rho_{2} & \beta_{3}\rho_{3} & \beta_{4}\rho_{4} \\
    \end{array}
  \right]^{-1}\\
&\times\left(
\left[
  \begin{array}{cc}
    0 & 0 \\
    0 & 0 \\
    1 & 1 \\
    \widetilde{\gamma}_{1} & \widetilde{\gamma}_{2} \\
  \end{array}
\right]
\left[
  \begin{array}{cc}
    e^{\widetilde{\gamma}_{1}b_{1}} & 0 \\
    0 & e^{\widetilde{\gamma}_{2}b_{1}} \\
  \end{array}
\right]
\left[
  \begin{array}{c}
    \widetilde{C}_{1} \\
    \widetilde{C}_{2} \\
  \end{array}
\right]
+
\left[
  \begin{array}{c}
    b_{1}-\widetilde{K}(1) \\
    1 \\
    \widetilde{p}b_{1}+\widetilde{q} \\
    \widetilde{p} \\
  \end{array}
\right]\right).
\end{aligned}
\end{equation*}
From (\ref{SF-v2-b2-1}) and (\ref{SF-v2-b2-2}), we have
\begin{equation}\label{SF-widetilde C1 widetilde C2}
\begin{aligned}
\left[
  \begin{array}{c}
    \widetilde{C}_{1} \\
    \widetilde{C}_{2} \\
  \end{array}
\right]
=
\left[
  \begin{array}{cc}
    e^{-\widetilde{\gamma}_{1}b_{2}} & 0 \\
    0 & e^{-\widetilde{\gamma}_{2}b_{2}} \\
  \end{array}
\right]
\left[
  \begin{array}{cc}
    1 & 1 \\
    \widetilde{\gamma}_{1} & \widetilde{\gamma}_{2} \\
  \end{array}
\right]^{-1}
\left[
  \begin{array}{c}
    (1-\widetilde{p})b_{2}-\widetilde{q}-\widetilde{K}(2) \\
    1-\widetilde{p} \\
  \end{array}
\right].
\end{aligned}
\end{equation}
So we obtain
\begin{equation}\label{SF-A1 A2 A3 A4 form-2}
\begin{aligned}
\left[
  \begin{array}{c}
    A_{1} \\
    A_{2} \\
    A_{3} \\
    A_{4} \\
  \end{array}
\right]
=&\left[
  \begin{array}{cccc}
    e^{-\beta_{1}b_{1}} & 0 & 0 & 0 \\
    0 & e^{-\beta_{2}b_{1}} & 0 & 0 \\
    0 & 0 & e^{-\beta_{3}b_{1}} & 0 \\
    0 & 0 & 0 & e^{-\beta_{4}b_{1}} \\
  \end{array}
\right]
\left[
    \begin{array}{cccc}
      1 & 1 & 1 & 1 \\
      \beta_{1} & \beta_{2} & \beta_{3} & \beta_{4} \\
      \rho_{1} & \rho_{2} & \rho_{3} & \rho_{4} \\
      \beta_{1}\rho_{1} & \beta_{2}\rho_{2} & \beta_{3}\rho_{3} & \beta_{4}\rho_{4} \\
    \end{array}
  \right]^{-1}\\
&\times\left(
\left[
  \begin{array}{cc}
    0 & 0 \\
    0 & 0 \\
    1 & 1 \\
    \widetilde{\gamma}_{1} & \widetilde{\gamma}_{2} \\
  \end{array}
\right]
\left[
  \begin{array}{cc}
    e^{\widetilde{\gamma}_{1}b_{1}} & 0 \\
    0 & e^{\widetilde{\gamma}_{2}b_{1}} \\
  \end{array}
\right]
\left[
  \begin{array}{cc}
    e^{-\widetilde{\gamma}_{1}b_{2}} & 0 \\
    0 & e^{-\widetilde{\gamma}_{2}b_{2}} \\
  \end{array}
\right]
\left[
  \begin{array}{cc}
    1 & 1 \\
    \widetilde{\gamma}_{1} & \widetilde{\gamma}_{2} \\
  \end{array}
\right]^{-1}\right.\\
&\left.\times
\left[
  \begin{array}{c}
    (1-\widetilde{p})b_{2}-\widetilde{q}-\widetilde{K}(2) \\
    1-\widetilde{p} \\
  \end{array}
\right]
+
\left[
  \begin{array}{c}
    b_{1}-\widetilde{K}(1) \\
    1 \\
    \widetilde{p}b_{1}+\widetilde{q} \\
    \widetilde{p} \\
  \end{array}
\right]\right)\\
\doteq& F_{2}(b_{1},b_{2}).
\end{aligned}
\end{equation}
Note that in the above equation (\ref{SF-A1 A2 A3 A4 form-2}), only $b_{1},b_{2}$ are involved
to represent $(A_{1},A_{2},A_{3},A_{4})^{\top}$.

Combining (\ref{SF-A1 A2 A3 A4 form-1}) and (\ref{SF-A1 A2 A3 A4 form-2}) leads to
\begin{equation}\label{SF-a1 a2 b1 b2}
\begin{aligned}
F_{1}(a_{1},a_{2})=F_{2}(b_{1},b_{2}),
\end{aligned}
\end{equation}
from which we can solve the threshold levels $a_{1},a_{2},b_{1},b_{2}$.
Then, $A_{1},A_{2},A_{3},A_{4}$ can be represented by
(\ref{SF-A1 A2 A3 A4 form-1}) or (\ref{SF-A1 A2 A3 A4 form-2}),
$C_{1},C_{2}$ can be represented by (\ref{SF-C1 C2}),
$\widetilde{C}_{1},\widetilde{C}_{2}$ can be represented
by (\ref{SF-widetilde C1 widetilde C2}).

Based on the verification theorem (Theorem \ref{VT theorem}),
we have the following theorem.
\begin{theorem}\label{theorem SF}
Suppose that the system of algebraic equations (\ref{SF-v1-a1-1})-(\ref{SF-v2-b2-2})
has a solution $a_{1}$, $a_{2}$, $b_{1}$, $b_{2}$, $A_{1}$, $A_{2}$, $A_{3}$, $A_{4}$,
$C_{1}$, $C_{2}$, $\widetilde{C}_{1}$, $\widetilde{C}_{2}$ such that $a_{1}<a_{2}<b_{1}<b_{2}$.
Let $v(x,i)$, $i\in\{1,2\}$, be given by Table \ref{Table} and satisfy the conditions of
Theorem \ref{VT theorem}. Define $(\tau_{1}^{*},\tau_{2}^{*})$ as follows:
\begin{equation*}
\begin{aligned}
\tau_{1}^{*}=\inf\{t\geq0|(X_{t},\alpha_{t})\notin D_{1}\},
\end{aligned}
\end{equation*}
and
\begin{equation*}
\begin{aligned}
\tau_{2}^{*}=\inf\{t\geq0|(X_{t},\alpha_{t})\notin D_{2}\},
\end{aligned}
\end{equation*}
where
\begin{equation*}
\begin{aligned}
D_{1}=\{(x,1)|x\in(a_{1},\infty)\}\cup\{(x,2)|x\in(a_{2},\infty)\},
\end{aligned}
\end{equation*}
and
\begin{equation*}
\begin{aligned}
D_{2}=\{(x,1)|x\in(-\infty,b_{1})\}\cup\{(x,2)|x\in(-\infty,b_{2})\}.
\end{aligned}
\end{equation*}
Then, $(\tau_{1}^{*},\tau_{2}^{*})$ is a Nash equilibrium for the two players
and $v(x,i)$, $i\in\{1,2\}$, is the corresponding value function of the game.
\end{theorem}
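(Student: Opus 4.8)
The plan is to obtain this theorem as a direct consequence of the verification theorem (Theorem \ref{VT theorem}). Since the statement already postulates that the tabulated function $v$ satisfies conditions (a)--(d) there, the bulk of the analytic content is imported wholesale. The only genuine work is to reconcile the two descriptions of the equilibrium stopping times: the abstract ones (\ref{optimal 1})--(\ref{optimal 2}), phrased through the regions $D_{1,\alpha_{t}}$ and $D_{2,\alpha_{t}}$, and the concrete threshold ones defined in this statement through $D_{1}$ and $D_{2}$. Once these are shown to coincide path-by-path, Theorem \ref{VT theorem} yields the conclusion verbatim.

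First I would make the coefficients explicit. The hypothesis that the algebraic system (\ref{SF-v1-a1-1})--(\ref{SF-v2-b2-2}) admits a solution with $a_{1}<a_{2}<b_{1}<b_{2}$ pins down $A_{1},\dots,A_{4}$, $C_{1},C_{2}$, $\widetilde{C}_{1},\widetilde{C}_{2}$ and the four thresholds, hence a fully determined $v(\cdot,1)$ and $v(\cdot,2)$ via Table \ref{Table}. On each open subinterval $v(\cdot,i)$ solves the relevant constant-coefficient linear ODE and is therefore smooth there, while the twelve equations (\ref{SF-v1-a1-1})--(\ref{SF-v2-b2-2}) are precisely the value-matching and first-derivative-matching conditions at $a_{1},a_{2},b_{1},b_{2}$; this is how the assumed condition (a) is realized concretely, with the only possible second-derivative breaks confined to these four points.

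The heart of the argument is the identification
\begin{equation*}
\begin{aligned}
D_{1,1}=(a_{1},\infty),\quad D_{1,2}=(a_{2},\infty),\quad
D_{2,1}=(-\infty,b_{1}),\quad D_{2,2}=(-\infty,b_{2}),
\end{aligned}
\end{equation*}
recalling $D_{1,i}=\{v(x,i)<x-K(i)\}$ and $D_{2,i}=\{v(x,i)>x-\widetilde{K}(i)\}$. On the designated stopping rays $v$ equals the relevant payoff by Table \ref{Table} (for instance $v(x,1)=x-K(1)$ on $(-\infty,a_{1})$ and $v(x,1)=x-\widetilde{K}(1)$ on $(b_{1},\infty)$), so those rays fall outside $D_{1,1}$ and $D_{2,1}$ respectively, with equality propagated to the endpoints by the value-matching equations. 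On the complementary intervals one must verify the \emph{strict} inequalities $v(x,1)<x-K(1)$, $v(x,2)<x-K(2)$, $v(x,1)>x-\widetilde{K}(1)$ and $v(x,2)>x-\widetilde{K}(2)$, so that the continuation regions are exactly the claimed half-lines. The non-strict versions come for free from condition (b); ruling out any interior contact is where the explicit exponential form of $v$ and the ordering $a_{1}<a_{2}<b_{1}<b_{2}$ together with $K(i)<\widetilde{K}(i)$ must be exploited, and I expect this strictness check to be the main obstacle.

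With these region identities in hand, the sets defined here satisfy $D_{1}=\{(x,i):x\in D_{1,i}\}$ and $D_{2}=\{(x,i):x\in D_{2,i}\}$, whence $(X_{t},\alpha_{t})\notin D_{1}$ if and only if $X_{t}\notin D_{1,\alpha_{t}}$, and similarly for $D_{2}$. Thus the stopping times $\tau_{1}^{*}$, $\tau_{2}^{*}$ of this theorem agree path-by-path with those of (\ref{optimal 1})--(\ref{optimal 2}). Since $v$ satisfies (a)--(d) by hypothesis, Theorem \ref{VT theorem} applies directly and gives that $(\tau_{1}^{*},\tau_{2}^{*})$ is a Nash equilibrium with value function $v$, completing the proof.
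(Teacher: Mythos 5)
Your proposal matches the paper's treatment: Theorem \ref{theorem SF} is presented as a direct corollary of Theorem \ref{VT theorem} (the paper gives no separate proof beyond invoking it), and your reduction — importing conditions (a)--(d) as hypotheses and identifying the continuation regions $D_{1,i}$, $D_{2,i}$ with the threshold half-lines $(a_{i},\infty)$ and $(-\infty,b_{i})$ so that the two definitions of $(\tau_{1}^{*},\tau_{2}^{*})$ agree pathwise — is exactly the intended argument. You are in fact more explicit than the paper about the one nontrivial point, namely the \emph{strict} inequalities $v(x,i)<x-K(i)$ on $(a_{i},\infty)$ and $v(x,i)>x-\widetilde{K}(i)$ on $(-\infty,b_{i})$ needed for the region identification; the paper leaves this implicit and only checks it numerically (Figures \ref{Figure 2} and \ref{Figure 3}), so flagging it as the remaining obstacle is consistent with, not weaker than, the paper's own level of rigor.
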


\section{Numerical results}\label{section NR}

In this section, we numerically demonstrate the dependence of threshold
levels on various model parameters and present a reduction to the case
with no regime switching.

\subsection{Dependence of threshold levels}

We take $r=3$, $\sigma(1)=2$, $\sigma(2)=4$, $K(1)=2$, $K(2)=3$,
$\widetilde{K}(1)=5$, $\widetilde{K}(2)=6$, $\lambda_{1}=2$, $\lambda_{2}=5$
as a set of \emph{benchmark parameters} and compute the closed-form solutions
given by Theorem \ref{theorem SF}. In this case, from (\ref{SF-a1 a2 b1 b2}),
the threshold levels are computed to be $(a_{1},a_{2},b_{1},b_{2})=(0.68,1.86,5.79,8.71)$.
The value function $v(x,i)$, $i\in\{1,2\}$, is plotted in Figure \ref{Figure 1}.
Moreover, Figures \ref{Figure 2} and \ref{Figure 3} verify that $v(x,i)$, $i\in\{1,2\}$,
satisfies the conditions of the verification theorem (Theorem \ref{VT theorem}).
Next, we examine the monotonicity of the threshold levels
$(a_{1},a_{2},b_{1},b_{2})$ with respect to the model parameters
$\sigma(i)$, $K(i)$, $\widetilde{K}(i)$, $i\in\mathcal{M}$.

\begin{figure}[htbp]
\centering
\includegraphics[width=5in]{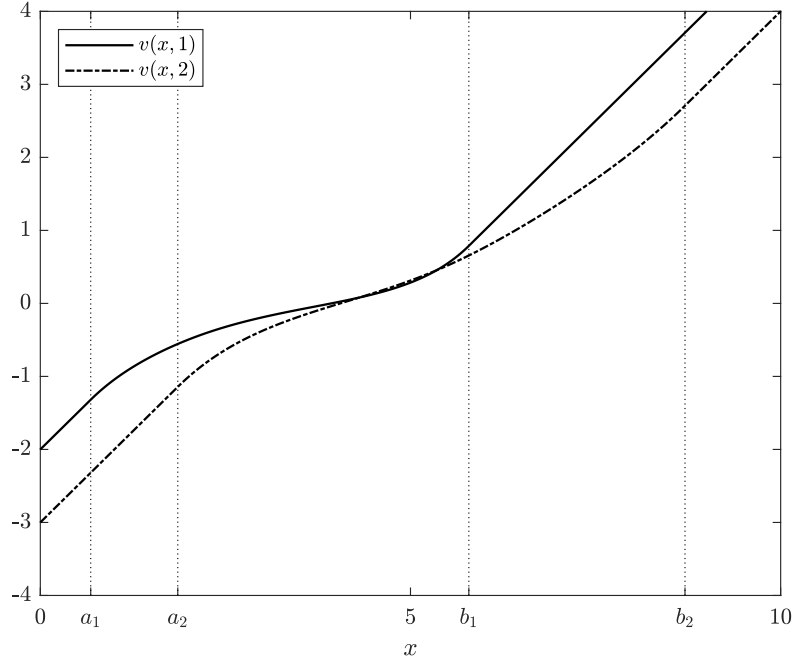}
\caption{Value function $v(x,i)$, $i\in\{1,2\}$}
\label{Figure 1}
\end{figure}

\begin{figure}[htbp]
\centering
\includegraphics[width=5in]{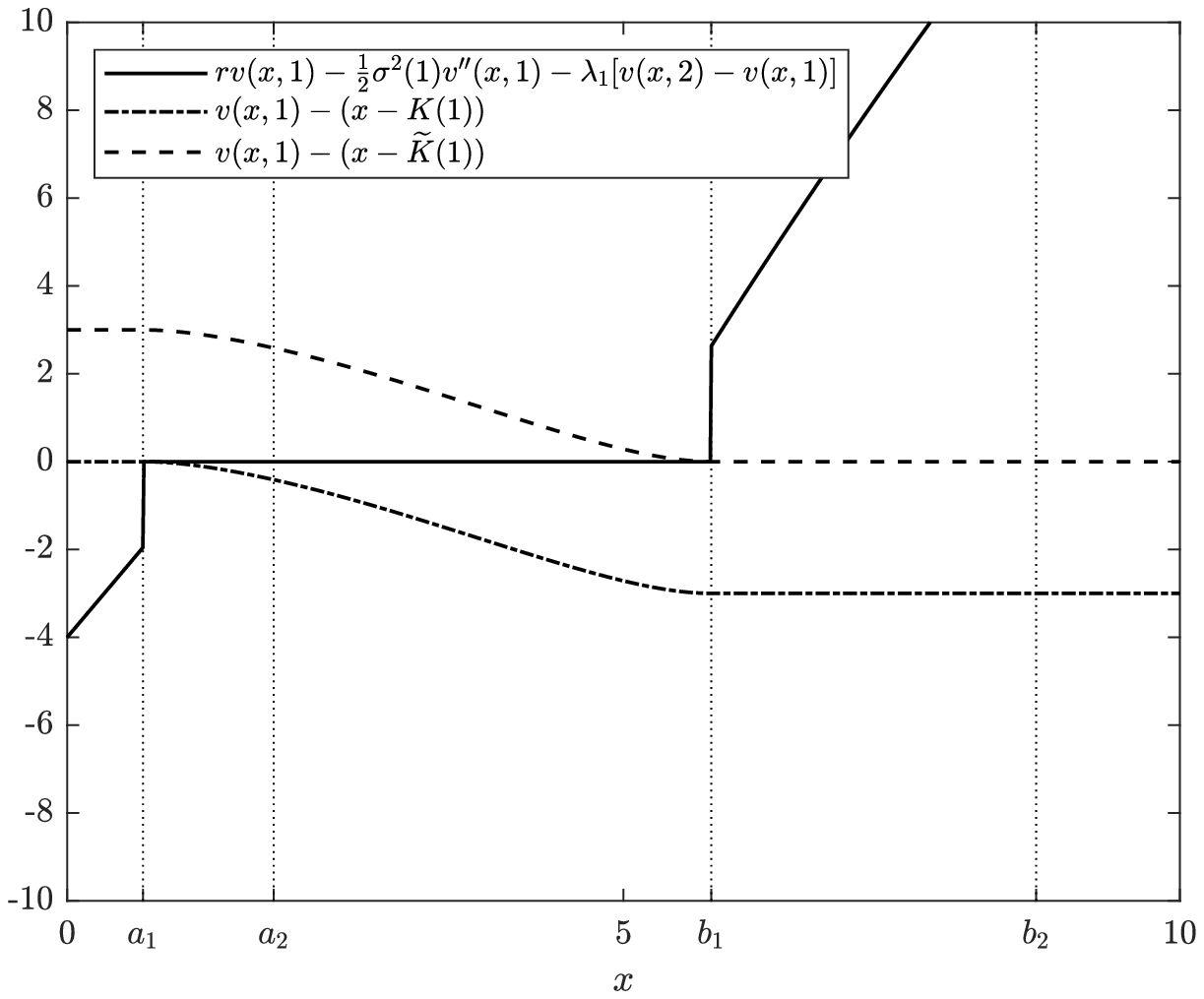}
\caption{Verification theorem for $v(x,1)$}
\label{Figure 2}
\end{figure}

\begin{figure}[htbp]
\centering
\includegraphics[width=5in]{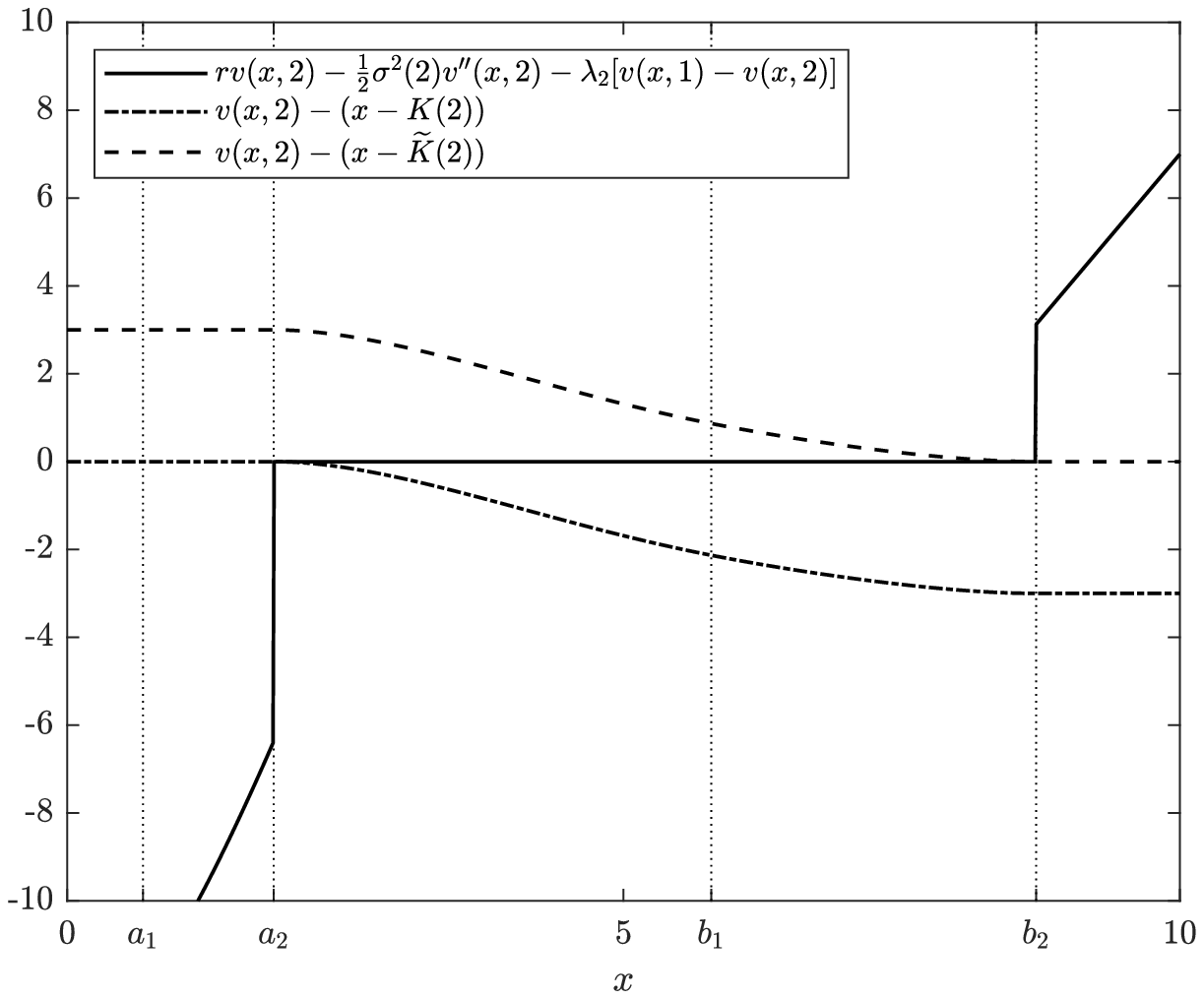}
\caption{Verification theorem for $v(x,2)$}
\label{Figure 3}
\end{figure}

First, we vary $\sigma(1)$ and keep all other parameters fixed.
The resulting $(a_{1},a_{2},b_{1},b_{2})$ are listed in Table \ref{sigma 1}.
From Table \ref{sigma 1}, we see that $a_{1}$ decreases and $b_{1}$ increases
as $\sigma(1)$ increases, while $a_{2}$ and $b_{2}$ remain nearly unchanged.
This is because $\sigma(1)$ is the volatility of $X_{t}$ when the
Markov chain $\alpha_{t}$ is at regime 1. The larger of $\sigma(1)$,
the farther $X_{t}$ can reach, which leads to a lower $a_{1}$ for Player 1
and a higher $b_{1}$ for Player 2 to achieve a better payoff.
On the other hand, we vary $\sigma(2)$ and the corresponding results
are similar and listed in Table \ref{sigma 2}.
\begin{table}[htbp]
\centering
\caption{Dependence on $\sigma(1)$}
\label{sigma 1}
\renewcommand\arraystretch{1.5}
\setlength{\tabcolsep}{3mm}
\begin{tabular}{|c|ccccc|}
  \hline
  $\sigma(1)$ & 2.0 & 2.5 & 3.0 & 3.5 & 4.0 \\
  \hline
  $a_{1}$ & 0.68 & 0.51 & 0.34 & 0.17 & 0.01 \\
  \hline
  $a_{2}$ & 1.86 & 1.84 & 1.82 & 1.81 & 1.79 \\
  \hline
  $b_{1}$ & 5.79 & 5.95 & 6.11 & 6.27 & 6.43 \\
  \hline
  $b_{2}$ & 8.71 & 8.71 & 8.71 & 8.72 & 8.72 \\
  \hline
\end{tabular}
\end{table}
\begin{table}[htbp]
\centering
\caption{Dependence on $\sigma(2)$}
\label{sigma 2}
\renewcommand\arraystretch{1.5}
\setlength{\tabcolsep}{3mm}
\begin{tabular}{|c|ccccc|}
  \hline
  $\sigma(2)$ & 4.0 & 4.5 & 5.0 & 5.5 & 6.0 \\
  \hline
  $a_{1}$ & 0.68 & 0.68 & 0.68 & 0.67 & 0.67 \\
  \hline
  $a_{2}$ & 1.86 & 1.73 & 1.61 & 1.49 & 1.37 \\
  \hline
  $b_{1}$ & 5.79 & 5.80 & 5.80 & 5.81 & 5.82 \\
  \hline
  $b_{2}$ & 8.71 & 8.85 & 8.99 & 9.13 & 9.27 \\
  \hline
\end{tabular}
\end{table}

Then, we vary $K(1)$ and keep all other parameters fixed.
The resulting $(a_{1},a_{2},b_{1},b_{2})$ are listed in Table \ref{K 1}.
Table \ref{K 1} suggests that $a_{1}$ increases if $K(1)$ increases,
while $a_{2}$, $b_{1}$, $b_{2}$ have no obvious variation.
Note that $a_{1}-K(1)$ is the payoff for Player 1 if it stops
when the Markov chain is at regime 1.
Player 1 is the one who wants to minimize (\ref{payoff}),
so a larger $K(1)$ means a bigger stopping reward, which encourages Player 1
to lock down profit by stopping earlier (i.e., a higher $a_{1}$).
On the other hand, we vary $K(2)$ and the corresponding results are similar
and listed in Table \ref{K 2}.
\begin{table}[htbp]
\centering
\caption{Dependence on $K(1)$}
\label{K 1}
\renewcommand\arraystretch{1.5}
\setlength{\tabcolsep}{3mm}
\begin{tabular}{|c|ccccc|}
  \hline
  $K(1)$ & 2.0 & 2.1 & 2.2 & 2.3 & 2.4 \\
  \hline
  $a_{1}$ & 0.68 & 0.84 & 1.00 & 1.16 & 1.31 \\
  \hline
  $a_{2}$ & 1.86 & 1.85 & 1.84 & 1.83 & 1.82 \\
  \hline
  $b_{1}$ & 5.79 & 5.79 & 5.79 & 5.79 & 5.79 \\
  \hline
  $b_{2}$ & 8.71 & 8.71 & 8.71 & 8.71 & 8.71 \\
  \hline
\end{tabular}
\end{table}
\begin{table}[htbp]
\centering
\caption{Dependence on $K(2)$}
\label{K 2}
\renewcommand\arraystretch{1.5}
\setlength{\tabcolsep}{3mm}
\begin{tabular}{|c|ccccc|}
  \hline
  $K(2)$ & 3.0 & 3.1 & 3.2 & 3.3 & 3.4 \\
  \hline
  $a_{1}$ & 0.68 & 0.62 & 0.56 & 0.49 & 0.43 \\
  \hline
  $a_{2}$ & 1.86 & 1.97 & 2.08 & 2.19 & 2.30 \\
  \hline
  $b_{1}$ & 5.79 & 5.78 & 5.78 & 5.78 & 5.77 \\
  \hline
  $b_{2}$ & 8.71 & 8.71 & 8.71 & 8.70 & 8.70 \\
  \hline
\end{tabular}
\end{table}

Finally, we vary $\widetilde{K}(1)$ and keep all other parameters fixed.
The resulting $(a_{1},a_{2},b_{1},b_{2})$ are listed in Table \ref{widetilde K 1}.
Table \ref{widetilde K 1} implies that $b_{1}$ increases in $\widetilde{K}(1)$,
while $a_{1}$, $a_{2}$, $b_{2}$ fluctuate slightly. This is due to that
a larger $\widetilde{K}(1)$ means a bigger stopping cost for Player 2
when the Markov chain is at regime 1, which in turn needs to be compensated
by a higher stopping level (i.e., a higher $b_{1}$).
On the other hand, we vary $\widetilde{K}(2)$ and the corresponding results
are similar and listed in Table \ref{widetilde K 2}.
\begin{table}[htbp]
\centering
\caption{Dependence on $\widetilde{K}(1)$}
\label{widetilde K 1}
\renewcommand\arraystretch{1.5}
\setlength{\tabcolsep}{3mm}
\begin{tabular}{|c|ccccc|}
  \hline
  $\widetilde{K}(1)$ & 5.0 & 5.1 & 5.2 & 5.3 & 5.4 \\
  \hline
  $a_{1}$ & 0.68 & 0.68 & 0.68 & 0.68 & 0.68 \\
  \hline
  $a_{2}$ & 1.86 & 1.85 & 1.85 & 1.84 & 1.84 \\
  \hline
  $b_{1}$ & 5.79 & 5.89 & 6.00 & 6.11 & 6.21 \\
  \hline
  $b_{2}$ & 8.71 & 8.55 & 8.40 & 8.26 & 8.12 \\
  \hline
\end{tabular}
\end{table}
\begin{table}[htbp]
\centering
\caption{Dependence on $\widetilde{K}(2)$}
\label{widetilde K 2}
\renewcommand\arraystretch{1.5}
\setlength{\tabcolsep}{3mm}
\begin{tabular}{|c|ccccc|}
  \hline
  $\widetilde{K}(2)$ & 6.0 & 6.1 & 6.2 & 6.3 & 6.4 \\
  \hline
  $a_{1}$ & 0.68 & 0.68 & 0.68 & 0.68 & 0.68 \\
  \hline
  $a_{2}$ & 1.86 & 1.86 & 1.86 & 1.86 & 1.86 \\
  \hline
  $b_{1}$ & 5.79 & 5.79 & 5.79 & 5.78 & 5.78 \\
  \hline
  $b_{2}$ & 8.71 & 8.97 & 9.22 & 9.49 & 9.75 \\
  \hline
\end{tabular}
\end{table}

\subsection{Reduction}

In the case with no regime switching, the state process $X\in R$ is described by
\begin{equation*}
\begin{aligned}
X_{t}=x+\sigma W_{t},\quad t\geq 0,
\end{aligned}
\end{equation*}
where $\sigma$ is a positive constant.
The objective functional for Player 1 to minimize and Player 2 to maximize is given by
\begin{equation*}
\begin{aligned}
J(x;\tau_{1},\tau_{2})
=E[e^{-r\tau_{1}}(X_{\tau_{1}}-K)1_{\{\tau_{1}<\tau_{2}\}}
+e^{-r\tau_{2}}(X_{\tau_{2}}-\widetilde{K})1_{\{\tau_{2}\leq\tau_{1}\}}],
\end{aligned}
\end{equation*}
where $K$ and $\widetilde{K}$ are two constants with $K<\widetilde{K}$.
As the case with regime switching, we would like to find a threshold-type
Nash equilibrium consists of two levels $a<b$ such that Player 1 will stop
if $X_{t}$ falls below $a$ and Player 2 will stop if $X_{t}$ goes above $b$.
For convenience, we list the derivation sketch of determining the threshold
levels $a$ and $b$ and the corresponding value function $v(x)$ as follows,
which is also based on Theorem \ref{VT theorem} but with no regime switching.

Consider $v(x)$ on the continuation region $(a,b)$:
\begin{equation*}
\begin{aligned}
rv(x)-\frac{1}{2}\sigma^{2}v^{\prime\prime}(x)=0.
\end{aligned}
\end{equation*}
The solution is
\begin{equation*}
\begin{aligned}
v(x)=A_{1}e^{\beta_{1}x}+A_{2}e^{\beta_{2}x},
\end{aligned}
\end{equation*}
where $\beta_{1},\beta_{2}$ satisfy the following equation:
\begin{equation*}
\begin{aligned}
r-\frac{1}{2}\sigma^{2}\beta^{2}=0,
\end{aligned}
\end{equation*}
which has two real roots:
\begin{equation*}
\begin{aligned}
\beta_{1,2}=\pm\sqrt{\frac{2r}{\sigma^{2}}}.
\end{aligned}
\end{equation*}
On $(-\infty,a)$, Player 1 stops such that
\begin{equation*}
\begin{aligned}
v(x)=x-K.
\end{aligned}
\end{equation*}
On $(b,\infty)$, Player 2 stops such that
\begin{equation*}
\begin{aligned}
v(x)=x-\widetilde{K}.
\end{aligned}
\end{equation*}
By applying the smooth-fit principle to $v(x)$ at $a$,
\begin{align}
&a-K=A_{1}e^{\beta_{1}a}+A_{2}e^{\beta_{2}a},\label{SF-v-a-1}\\
&1=A_{1}\beta_{1}e^{\beta_{1}a}+A_{2}\beta_{2}e^{\beta_{2}a},\label{SF-v-a-2}
\end{align}
and at $b$,
\begin{align}
&A_{1}e^{\beta_{1}b}+A_{2}e^{\beta_{2}b}=b-\widetilde{K},\label{SF-v-b-1}\\
&A_{1}\beta_{1}e^{\beta_{1}b}+A_{2}\beta_{2}e^{\beta_{2}b}=1.\label{SF-v-b-2}
\end{align}
From (\ref{SF-v-a-1}) and (\ref{SF-v-a-2}), we have
\begin{equation*}
\begin{aligned}
\left[
  \begin{array}{c}
    a-K \\
    1 \\
  \end{array}
\right]
=\left[
   \begin{array}{cc}
     e^{\beta_{1}a} & e^{\beta_{2}a} \\
     \beta_{1}e^{\beta_{1}a} & \beta_{2}e^{\beta_{2}a} \\
   \end{array}
 \right]
 \left[
  \begin{array}{c}
    A_{1} \\
    A_{2} \\
  \end{array}
\right].
\end{aligned}
\end{equation*}
Then,
\begin{equation}\label{SF-A1 A2 form-1}
\begin{aligned}
\left[
  \begin{array}{c}
    A_{1} \\
    A_{2} \\
  \end{array}
\right]
=\left[
   \begin{array}{cc}
     e^{\beta_{1}a} & e^{\beta_{2}a} \\
     \beta_{1}e^{\beta_{1}a} & \beta_{2}e^{\beta_{2}a} \\
   \end{array}
 \right]^{-1}
\left[
  \begin{array}{c}
    a-K \\
    1 \\
  \end{array}
\right].
\end{aligned}
\end{equation}
From (\ref{SF-v-b-1}) and (\ref{SF-v-b-2}), we have
\begin{equation*}
\begin{aligned}
\left[
  \begin{array}{c}
    b-\widetilde{K} \\
    1 \\
  \end{array}
\right]
=\left[
   \begin{array}{cc}
     e^{\beta_{1}b} & e^{\beta_{2}b} \\
     \beta_{1}e^{\beta_{1}b} & \beta_{2}e^{\beta_{2}b} \\
   \end{array}
 \right]
 \left[
  \begin{array}{c}
    A_{1} \\
    A_{2} \\
  \end{array}
\right].
\end{aligned}
\end{equation*}
Then,
\begin{equation}\label{SF-A1 A2 form-2}
\begin{aligned}
\left[
  \begin{array}{c}
    A_{1} \\
    A_{2} \\
  \end{array}
\right]
=\left[
   \begin{array}{cc}
     e^{\beta_{1}b} & e^{\beta_{2}b} \\
     \beta_{1}e^{\beta_{1}b} & \beta_{2}e^{\beta_{2}b} \\
   \end{array}
 \right]^{-1}
\left[
  \begin{array}{c}
    b-\widetilde{K} \\
    1 \\
  \end{array}
\right].
\end{aligned}
\end{equation}
It follows from (\ref{SF-A1 A2 form-1}) and (\ref{SF-A1 A2 form-2}) that
\begin{equation}\label{SF-a b}
\begin{aligned}
\left[
   \begin{array}{cc}
     e^{\beta_{1}a} & e^{\beta_{2}a} \\
     \beta_{1}e^{\beta_{1}a} & \beta_{2}e^{\beta_{2}a} \\
   \end{array}
 \right]^{-1}
\left[
  \begin{array}{c}
    a-K \\
    1 \\
  \end{array}
\right]
=\left[
   \begin{array}{cc}
     e^{\beta_{1}b} & e^{\beta_{2}b} \\
     \beta_{1}e^{\beta_{1}b} & \beta_{2}e^{\beta_{2}b} \\
   \end{array}
 \right]^{-1}
\left[
  \begin{array}{c}
    b-\widetilde{K} \\
    1 \\
  \end{array}
\right],
\end{aligned}
\end{equation}
from which we can solve $a$ and $b$.
Then, $A_{1}$ and $A_{2}$ can be represented by
(\ref{SF-A1 A2 form-1}) or (\ref{SF-A1 A2 form-2}).

We consider the following two sets of parameters and numerically compute
the corresponding threshold levels and value functions.

Case (\romannumeral1). Let $r=3$, $\sigma=\sigma(1)=2$,
$K=K(1)=2$, $\widetilde{K}=\widetilde{K}(1)=5$.
In this case, from (\ref{SF-a b}), the threshold levels are computed to be
$(a^{{\rm(\romannumeral1)}},b^{{\rm(\romannumeral1)}})=(1.19,5.81)$.
The corresponding value function $v^{{\rm(\romannumeral1)}}(x)$
is plotted in Figure \ref{Figure 4}.

\begin{figure}[htbp]
\centering
\includegraphics[width=5in]{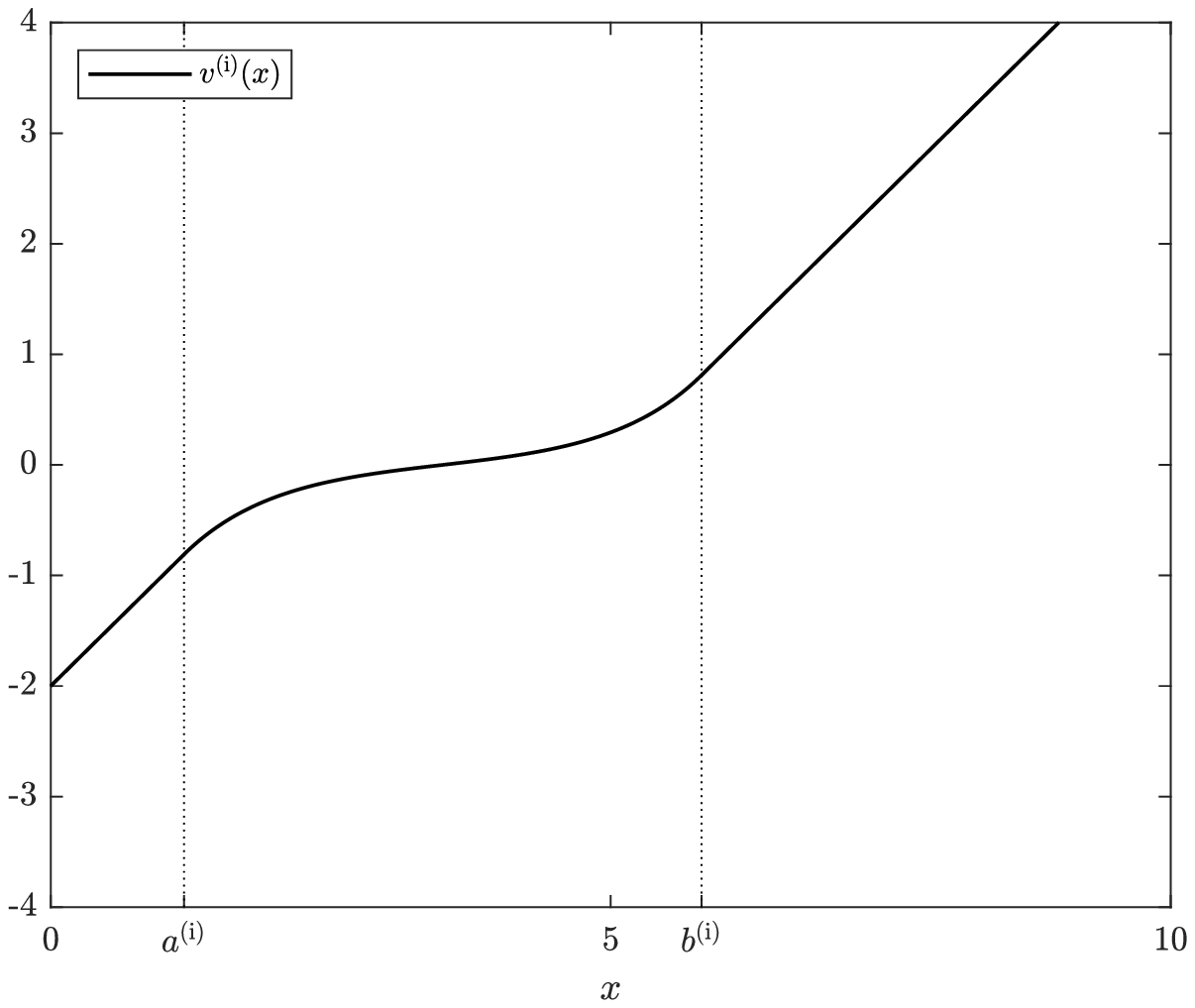}
\caption{Value function $v^{({\rm \romannumeral1})}(x)$}
\label{Figure 4}
\end{figure}

Case (\romannumeral2). Let $r=3$, $\sigma=\sigma(2)=4$,
$K=K(2)=3$, $\widetilde{K}=\widetilde{K}(2)=6$.
In this case, from (\ref{SF-a b}), the threshold levels are computed to be
$(a^{{\rm(\romannumeral2)}},b^{{\rm(\romannumeral2)}})=(1.44,7.56)$.
The corresponding value function $v^{{\rm(\romannumeral2)}}(x)$
is plotted in Figure \ref{Figure 5}.

\begin{figure}[htbp]
\centering
\includegraphics[width=5in]{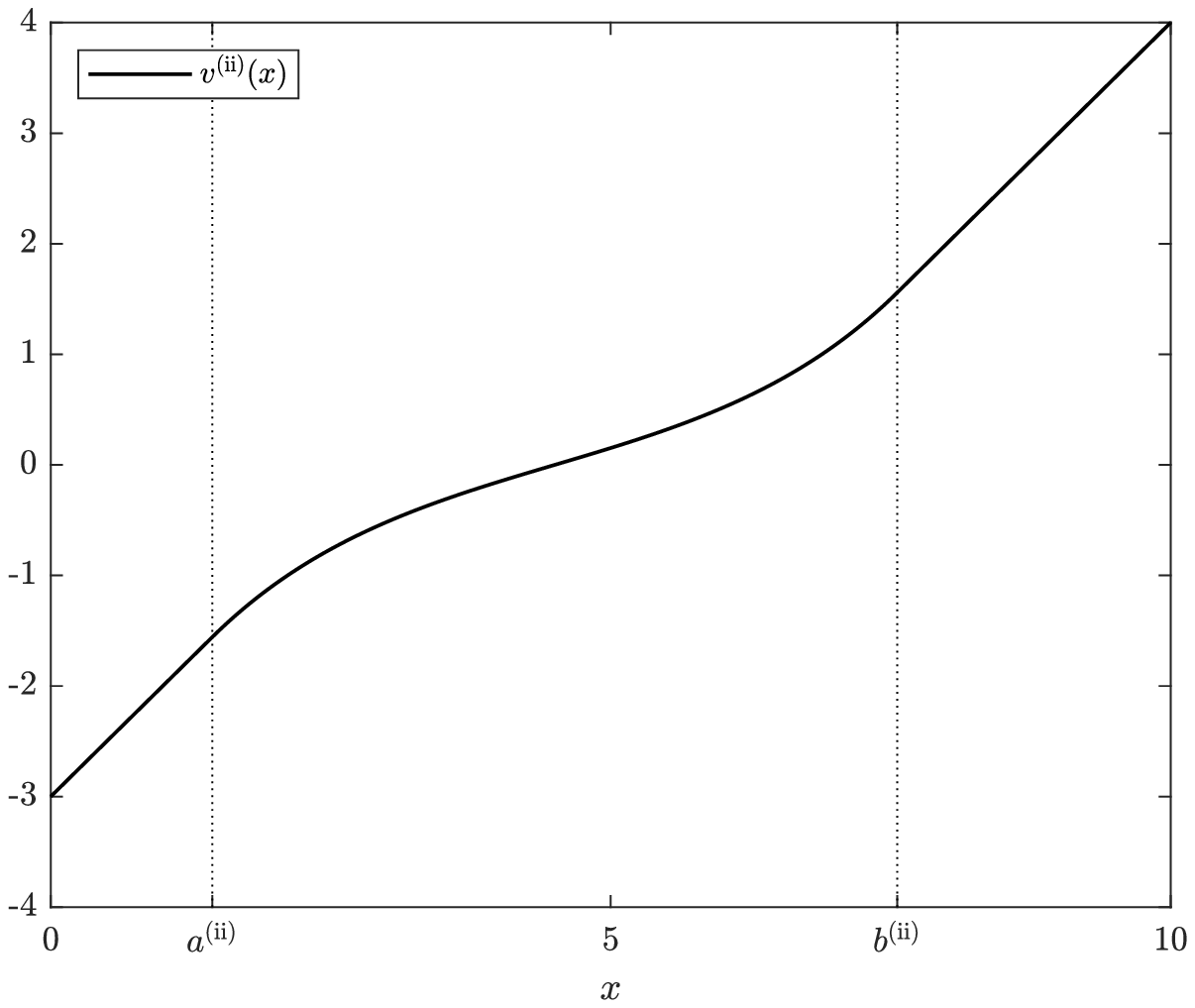}
\caption{Value function $v^{({\rm \romannumeral2})}(x)$}
\label{Figure 5}
\end{figure}

In addition, we can also verify numerically that
$v^{{\rm(\romannumeral1)}}(x)$ and $v^{{\rm(\romannumeral2)}}(x)$
satisfy the conditions of Theorem \ref{VT theorem}.
We omit the details for simplicity of presentation.

\section{Concluding remarks}\label{section CR}

There are three main contributions made in this paper.
Firstly, a verification theorem as a sufficient criterion
for Nash equilibriums is established, which involves a set
of VIs and an appropriate regularity requirement.
Secondly, the smooth-fit principle is further developed
for our stopping game problem with regime switching to solve
the VIs and derive a suitable system of algebraic equations.
Thirdly, various numerical experiments are included to
demonstrate the theoretical results with reasonable remarks.

This paper, we believe, has posed more questions than answers.
The problem formulation considered in this paper is a simple
but illustrative one, extensions to more general problems
may open up a new avenue for optimal stopping theory.
On the other hand, the stopping game problem with regime switching
should have a wide range of applications in many fields,
such as finance, management, engineering, and so on.
These topics in practice will be considered in our future study.

\end{document}